\documentclass[11pt, reqno]{amsart}
\usepackage{amsmath,amssymb,amsfonts,amscd,hyperref,color}
\usepackage[utf8]{inputenc}
\usepackage{csquotes}

\usepackage[abs]{overpic}
\usepackage{verbatim}

\newcommand{\Hmm}[1]{\leavevmode{\marginpar{\tiny%
$\hbox to 0mm{\hspace*{-0.5mm}$\leftarrow$\hss}%
\vcenter{\vrule depth 0.1mm height 0.1mm width \the\marginparwidth}%
\hbox to
0mm{\hss$\rightarrow$\hspace*{-0.5mm}}$\\\relax\raggedright #1}}}

\newtheorem{thm}{Theorem}

\newtheorem{pro}[thm]{Proposition}

\theoremstyle{definition}

\newtheorem*{rem}{Remark}

\numberwithin{equation}{section}
\newcommand{\Z}{{\mathbb Z}}
\newcommand{\R}{{\mathbb R}}

\newcommand{\N}{{\mathbb N}}

\newcommand{\D}{{\mathbb D}}

\newcommand{\al}{{\alpha}}
\newcommand{\be}{{\beta}}

\newcommand{\ph}{{\varphi}}
\newcommand{\eps}{{\varepsilon}}

\newcommand{\si}{{\sigma}}
\newcommand{\ka}{{\kappa}}

\renewcommand{\d}{\,\mathrm{d}}
\newcommand{\ep}{\varepsilon}

\newcommand{\longto}{\longrightarrow}


\newcommand{\s}{\sigma}

\renewcommand{\D}{\Delta}


\begin{document}
\title[Optimal Hardy Inequality for Fractional Laplacians]
{Optimal Hardy Inequality for Fractional Laplacians on the Integers}
\author[M.~Keller]{Matthias Keller}
\author[M.~Nietschmann]{Marius Nietschmann}

\address{M.~Keller and M.~Nietschmann, Institut f\"ur Mathematik, Universit\"at Potsdam
14476  Potsdam, Germany}

\email{matthias.keller@uni-potsdam.de}
\email{marius.nietschmann@uni-potsdam.de}
\date{\today}

\begin{abstract}
We study the fractional Hardy inequality on the integers. We prove the optimality of the Hardy weight  and hence affirmatively answer the question of sharpness of the constant. 
\end{abstract}
\maketitle

\section{Introduction}

In \cite{HardyPrecursor}, Hardy in 1919 proved an inequality which was shortly thereafter used to derive the \emph{classical Hardy inequality}
\[ \langle \D\ph, \ph \rangle \geq \langle w\ph, \ph \rangle \] 
with 
\[ w(x) = \frac1{4x^2} \] 
for $\ph\in C_c(\N_0)$ with $\ph(0)=0$, where $\D$ is the standard Laplacian on $\N_0$. We refer to \cite{HardyHistory} for more on the ``prehistory'' of Hardy's inequality. Indeed, the constant $1/4$ is known to be optimal in the sense that it cannot be replaced with a larger constant. Despite the optimality of the constant, \cite{KPPHardy} recently showed that the weight can be improved to a optimal one, i.e., to one that is largest possible in a certain sense. By definition, this includes the optimality of the constant.

Here, we study  a Hardy inequality 
\[ \langle \D^\s\ph, \ph \rangle \geq \langle w_\s\ph, \ph \rangle \] 
 of the fractional Laplacian $\D^\s$, $0<\s<1/2$, on $\Z$ for $\ph\in C_c(\Z)$ with 
\[ w_\s(x) 
= C_\s \frac1{|x|^{2\s}} + O\left(\frac1{|x|^{1+2\s}}\right) .\] 
The exact form of $w_\s$ which is given in terms of the Gamma function was obtained by  Ciaurri/Roncal in \cite{CR18}. However, the question concerning the optimality of the constant remained  open. Here, we show the optimality of their weight $ w_{\sigma} $ and as a consequence prove the optimality of the constant.

Hardy inequalities have a long mathematical tradition and are of great significance in various branches of mathematics such as functional analysis, partial differential equations, harmonic analysis, approximation theory and probability. To that end, a lot of work has been done to find optimal constants and critical weights in both the continuum and in the discrete setting (see e.g. \cite{Davies, DFP14, FT02, BGGP20, KPP20HardyOnManifoldsAndGraphs} and references therein). Typically, a Hardy inequality gives a lower bound of a quadratic form  by some weight which is then called a \emph{Hardy weight}.

The study of fractional Laplace operators can be traced back as far as to Riesz, see \cite{R39}. These operators appear as relativistic Schr\"odinger operators \cite{MV} in quantum mechanics and in the description of perturbed phenomena like turbulence \cite{B}, elasticity \cite{DPV}, laser design \cite{L}, and anomalous transport and diffusion \cite{M}. There are various ways to define the fractional Laplacian, for example, there is a spectral definition as a Fourier multiplier, a definition via the heat semigroup or a definition through harmonic extension. In fact, \cite{K17} discusses ten equivalent definitions for the fractional Laplacian on $\R^d$. Here, we rely on the definition involving a certain time-integral over the heat semigroup.

In the continuum, Hardy inequalities for the fractional Laplacian are rather well-understood. The explicit constants are known to be optimal, see \cite{FLS08} or \cite{H77, Y99} for earlier references. Similar results have been shown for the half-space in \cite{BD08}. In contrast, a fractional Hardy inequality in the discrete setting was obtained in \cite{CR18} but optimality of the constant remained an open question. Our contribution here is to show that their weight is optimal and thus affirmatively answer their question  about the optimality of the constant.

The structure of the paper is as follows. In the next section, we introduce the basic notions and formulate the main result. In Section~\ref{s:HardyWeights}, we present a family of Hardy weights  arising from superharmonic functions via the ground state transform. For a specific choice within this family we show in Section~\ref{s:optimality} optimality via a so called null-sequence argument.

\section{Set-Up and Main Result} \label{s:intro}

In this section, we introduce the main objects and concepts of this paper. We start by introducing the fractional Laplacian on the integers $ \Z $. Moreover, we recall the notion of optimality of Hardy weights in the sense of \cite{DFP14, KPP}. Finally, we state the main result.

The Laplacian $ \Delta $ on $ \ell^{2}(\Z) $ is the bounded operator acting as
\begin{align*}
	\Delta f(x)= (f(x)-f(x+1)) + (f(x)-f(x-1))
\end{align*}
for $ x\in \Z$.
The fractional Laplacian $ \Delta^{\sigma} $, $ \sigma\in (0,1) $, on $ \ell^{2}(\Z) $ is then given by the spectral calculus. By the boundedness of $ \Delta $, the fractional Laplacian is a bounded operator as well. Moreover, the spectral calculus yields by direct computation that $ \Delta^{\si} $ can be represented as
\begin{align*}
	\Delta^{\sigma}f(x)=\frac{1}{\Gamma(-\sigma)}\int_{0}^{\infty} (e^{-t\Delta}-1)f(x)\frac{\d t}{t^{1+\sigma}}
\end{align*}
where $ x\in \Z $, the operator $ e^{-t\Delta} $, $ t\ge0 $, is the semigroup of $ \Delta $ and $ \Gamma $ is the Gamma function. 
This operator is thoroughly studied in \cite{CRSTV}. We recall the most important facts here and refer to \cite{CRSTV} or \cite{Nietschmann22} for details.

Due to discreteness and homogeneity of $ \Z $, it can be directly seen that the operator $ \Delta^{\sigma} $, $ \si\in (0,1) $, can be written as a so called graph Laplacian or a kernel operator
\begin{align*}
	\Delta^{\sigma} f(x)=\sum_{y\in \Z}\ka_{\sigma}(x-y) (f(x)-f(y))
\end{align*}
where  the  kernel $ \ka_{\si} $ can be computed using the formula $ e^{-t\Delta}1=1 $ as $$  \ka_{\al}(x)=\frac{1}{|\Gamma(-\al)|}\int_{0}^{\infty}e^{-t\Delta}1_{0}(x)\frac{\d t}{t^{1+\al}}  $$
 for $ \al\in (-1/2,1) $, $   
 \al \neq 0 $, $ x\in \Z $, $ |x|>\al $,   as well as $ \kappa_{\al}(0)=0 $ for $ \al>0 $ and $ \kappa_{0}=1_{0} $. Here, $ 1_{0} $ denotes the characteristic function of $ 0 $.  We define $ \ka_{\al} $ here also for $ \al\in (-1/2,0] $ as it will be needed below.

Since the semigroup is  positivity improving (cf. \cite{KLW}), we infer that 
$ \kappa_{\al} $ is strictly positive. Hence, $ \Delta^{\sigma} $ is a graph Laplacian over $ \Z $ with respect to a non-locally finite weighted graph. It was shown in \cite[Lemma~9.2]{CRSTV} that
	\[ \kappa_\s (x)=\frac{4^\s \Gamma(\tfrac 1 2 + \s)}{\sqrt \pi |\Gamma(-\s)|} 
	\cdot \frac{\Gamma(|x|-\s)}{\Gamma(|x|+1+\s)}
= \frac{4^\s \Gamma(\tfrac 1 2 + \s)}{\sqrt \pi |\Gamma(-\s)|} 
\cdot \frac1{|x|^{1+2\s}} 
+ O\left(\frac1{|x|^{2+2\s}}\right) \]
for $ x\to\infty $. From the asymptotics  $ \ka_{\si}\in O(|\cdot|^{-1-2\si}) $, it is obvious that $ \Delta^{\si} $ can be extended to functions in the Banach space $$
B_{\si}=  \ell^1 (\Z, (1+|\cdot|)^{-1-2\si}) $$
for  $ \s\in (0,1) $. In particular, for $ \al>-1/2 $   and $ \si\in (0,1) $, we have
$  \ka_{ \al}\in B_{\si}  . $

Let us now turn to Hardy weights.
A non-trivial function $ w:\Z\longto [0,\infty) $ is called a \emph{Hardy weight} if
 for all compactly supported functions $ \ph\in C_{c}(\Z) $,
\begin{align*}
		\langle \Delta^{\si}\ph,\ph\rangle\ge 	\langle w\ph,\ph\rangle.
\end{align*}
By boundedness of $ \Delta^{\sigma} $, this inequality then extends directly to all functions in $ \ell^{2}(\Z) $. 
Next, we discuss optimality. A Hardy weight $ w $ is called \emph{critical} if any   Hardy weight $ w' \ge w$ satisfies $ w'=w $. If $ w $ is critical, then there  exists 
a unique (up to multiplicative constants) positive harmonic  function $ \psi $ (not necessarily in $ \ell^{2} $), i.e., $ (\Delta^{\sigma}-w)\psi =0$ which is called the \emph{Agmon ground state},  cf. \cite[Theorem 2.9]{KPP}. A critical Hardy weight $ w $ is called \emph{positive-critical} if $ \psi\in  \ell^{2}(\Z,w) $ and  \emph{null-critical} otherwise.
A Hardy weight $ w $ is called \emph{optimal} if the following two conditions are satisfied:
\begin{itemize}
	\item $ w $ is critical,
	\item $ w $ is null-critical.
\end{itemize}
It turns out  that if $ w $ is optimal, then it is also  \emph{optimal near infinity}, i.e., if $$ 	\langle \Delta^{\si}\ph,\ph\rangle\ge (1+\lambda)\langle w\ph,\ph\rangle   $$ for all $ \ph\in C_{c}(\Z\setminus K) $ for some compact $ K\subseteq \Z $, then $ \lambda \le0 $, cf. \cite{KPP,KovarikPinchover,Fischer}. This implies in particular, that the constant appearing within the Hardy weight is optimal.

Next, we present the Hardy weight which was obtained by \cite{CR18}.
Let 
\begin{align*}
	w_\s (x)
	=c_{\si} \cdot 
	\frac{\Gamma(|x|+\frac{1-2\s}4)
		\Gamma(|x|+\frac{3-2\s}4)}
	{\Gamma(|x|+\frac{3+2\s}4)
		\Gamma(|x|+\frac{1+2\s}4)} = 
	\frac{c_{\si}  }{|x|^{2\s}} 
	+ O\left(\frac1{|x|^{1+2\s}}\right)
\end{align*}\begin{align*}		
\mbox{with } c_{\si}= 4^\s \frac{\Gamma(\frac{1+2\s}4)^2}
	{\Gamma(\frac{1-2\s}4)^2}
\end{align*}
where $\s\in(0,1/2)$ which is strictly positive. We show  $ w_{\s} $ is an  optimal Hardy weight in the above sense. In particular, this affirmatively answers the open question in \cite{CR18} on the optimality of the constant.

\begin{thm}\label{main}
The function $ w_{\s} $ is an optimal Hardy weight of $ \Delta^{\sigma} $, $0<\s<\frac12$.	
\end{thm}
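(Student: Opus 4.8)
The plan is to establish the two defining properties of optimality—criticality and null-criticality—by exhibiting the right positive (super)harmonic functions and analyzing their behavior. The paper announces in the introduction that the Hardy weights will be produced via a \emph{ground state transform} applied to superharmonic functions of $\Delta^\sigma$, and that the specific weight $w_\sigma$ corresponds to a distinguished choice. Concretely, I would look for a positive function $u$ on $\Z$ with $\Delta^\sigma u = \frac{w_\sigma}{1-\,\cdot\,}$-type structure; more precisely, the standard recipe is to find $u>0$ superharmonic and set $w = \Delta^\sigma u / u$, so that $w$ is automatically a Hardy weight by the ground state representation $\langle \Delta^\sigma \varphi,\varphi\rangle - \langle w\varphi,\varphi\rangle = \tfrac12\sum_{x,y}\kappa_\sigma(x-y)u(x)u(y)\bigl(\tfrac{\varphi(x)}{u(x)}-\tfrac{\varphi(y)}{u(y)}\bigr)^2 \ge 0$. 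The first task, then, is to identify the function $u$ whose transform yields exactly the announced $w_\sigma$; given the Gamma-function form of both $\kappa_\sigma$ and $w_\sigma$, the natural candidate is a power-type function $u(x)\sim |x|^{(1-2\sigma)/2}$ (the analogue of the continuum harmonic $|x|^{(1-2\sigma)/2}$ and of the discrete factor $\Gamma(|x|+\tfrac{1-2\sigma}{4})/\Gamma(|x|+\tfrac{3+2\sigma}{4})$ appearing in the numerator/denominator of $w_\sigma$).

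For criticality I would use the characterization via a \emph{null-sequence} (an approximate ground state): $w$ is critical iff there is a sequence $\varphi_n\in C_c(\Z)$, bounded in the sense $\varphi_n\to\psi$ locally, with $\langle\Delta^\sigma\varphi_n,\varphi_n\rangle-\langle w\varphi_n,\varphi_n\rangle\to 0$ while $\varphi_n$ does not converge to zero, the limiting profile being the Agmon ground state $\psi$. The introduction explicitly flags this ``null-sequence argument'' as the method used in Section~\ref{s:optimality}. The construction is the standard logarithmic-cutoff device: take $\varphi_n = \eta_n \psi$ where $\psi = u$ is the (super)harmonic ground-state candidate and $\eta_n$ is a slowly varying cutoff (e.g. built from $\log$ of a truncation radius) so that the Dirichlet energy of the cutoff tends to zero. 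Using the ground state representation above, the residual energy of $\varphi_n$ reduces to a double sum weighted by $\kappa_\sigma$ of the squared differences of $\eta_n$, and I would bound this by the nonlocal ``carré du champ'' estimate $\sum_{x,y}\kappa_\sigma(x-y)u(x)u(y)(\eta_n(x)-\eta_n(y))^2\to 0$.

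Null-criticality then amounts to checking that the Agmon ground state $\psi$ fails to lie in $\ell^2(\Z,w)$, i.e. $\sum_x w_\sigma(x)\,\psi(x)^2 = \infty$. With $\psi(x)\sim|x|^{(1-2\sigma)/2}$ and $w_\sigma(x)\sim c_\sigma|x|^{-2\sigma}$, the summand behaves like $|x|^{(1-2\sigma)}\cdot|x|^{-2\sigma} = |x|^{1-4\sigma}$, which for $\sigma<1/2$ gives an exponent $1-4\sigma > -1$, so the series diverges; this is the direct computation that pins down null- rather than positive-criticality. It is also worth verifying that $\psi$ is the genuine harmonic (not merely superharmonic) function for $\Delta^\sigma - w_\sigma$, which is exactly what forces $w_\sigma$ to be the critical endpoint of the family rather than a strictly sub-critical member; this is where the precise constant $c_\sigma$ enters and cannot be enlarged.

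The main obstacle I anticipate is the \emph{nonlocality} of $\Delta^\sigma$: unlike the nearest-neighbor Laplacian, the kernel $\kappa_\sigma(x-y)$ couples all pairs, so the cutoff estimates in the null-sequence argument are not telescoping sums but genuine double series with slowly decaying weights $\kappa_\sigma(x-y)\sim|x-y|^{-1-2\sigma}$. Showing $\sum_{x,y}\kappa_\sigma(x-y)u(x)u(y)(\eta_n(x)-\eta_n(y))^2\to 0$ requires careful splitting into near-diagonal and far-off-diagonal regimes and exploiting the combined decay of $\kappa_\sigma$ and the slow variation of the logarithmic cutoff, together with the precise asymptotics of $u$ and $\kappa_\sigma$ supplied by \cite[Lemma~9.2]{CRSTV}. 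A second subtlety is verifying exactly (not just asymptotically) that the chosen $u$ transforms to $w_\sigma$ with the stated constant $c_\sigma$, which should follow from Gamma-function identities but demands exact evaluation of a convolution-type sum $\sum_y \kappa_\sigma(x-y)\,u(y)$ rather than an asymptotic one.
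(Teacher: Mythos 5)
Your overall architecture (ground state transform to produce the weight, a logarithmic-cutoff null-sequence for criticality, an $\ell^2(w)$-divergence check for null-criticality) is exactly the paper's, but the concrete object everything hinges on is wrong: the ground state. You propose $u(x)\sim|x|^{(1-2\sigma)/2}$, a \emph{growing} function. The actual Agmon ground state is $\kappa_{-\alpha}$ with $\alpha=(1+2\sigma)/4$, which by the kernel asymptotics $\kappa_\beta\in O(|\cdot|^{-1-2\beta})$ \emph{decays} like $|x|^{2\alpha-1}=|x|^{-(1-2\sigma)/2}$; this also matches the continuum ground state $|x|^{-(d-2\sigma)/2}$ with $d=1$, whose exponent you flipped. (The Gamma ratio you cite as the candidate, $\Gamma(|x|+\tfrac{1-2\sigma}{4})/\Gamma(|x|+\tfrac{3+2\sigma}{4})$, decays like $|x|^{-(1+2\sigma)/2}$, matching neither.) The sign error propagates into your null-criticality computation: you get $\psi^2w_\sigma\sim|x|^{1-4\sigma}$ and conclude divergence from $1-4\sigma>-1$, a robust power-law margin. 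The correct computation gives $\psi^2w_\sigma\sim|x|^{2(2\alpha-1)-2\sigma}=|x|^{-1}$, i.e.\ \emph{exactly} the harmonic series: null-criticality here is borderline, and that borderline is the whole point. Indeed, Theorem~\ref{thm}(b) of the paper shows that for $\alpha<(1+2\sigma)/4$ the weight $w_{\sigma,\alpha}$ is positive-critical, hence \emph{not} optimal; an argument in which the divergence has a power of $|x|$ to spare cannot be distinguishing the optimal weight from its non-optimal neighbors, so it must be computing with the wrong function.

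Two further gaps. First, you require the cutoff energy $\sum_{x,y}\kappa_\sigma(x-y)u(x)u(y)(\eta_n(x)-\eta_n(y))^2\to0$, but at the critical parameter the natural $\sqrt{\log}$ cutoff only yields \emph{bounded} energy: the paper's estimate gives $\sup_n Q^\sigma_{-\alpha}(e_n)<\infty$ when $\alpha=(1+2\sigma)/4$, with convergence to zero only in the subcritical range $\alpha<(1+2\sigma)/4$. The paper closes this gap with a Banach--Saks-type criterion (Proposition~\ref{p:crit}, via \cite{KLW}): uniform boundedness of the energies of a sequence $0\le e_n\le1$, $e_n\to1$ pointwise, already implies criticality. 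Without that ingredient, your null-sequence argument does not close as stated. Second, you correctly flag that your $u$ must transform \emph{exactly} to $w_\sigma$ but leave this as an unresolved ``convolution-type sum.'' The paper's solution is structural rather than computational: by spectral calculus with the resolvent-type regularization $(\Delta+\varepsilon)^\beta$ one obtains the exact identity $\Delta^\sigma\kappa_{-\alpha}=\kappa_{\sigma-\alpha}$ (Proposition~\ref{p:ka}), so the Hardy weight is the explicit ratio $w_{\sigma,\alpha}=\kappa_{\sigma-\alpha}/\kappa_{-\alpha}$, and $w_\sigma=w_{\sigma,\alpha}$ at $\alpha=(1+2\sigma)/4$ follows from Gamma-function identities; no asymptotic evaluation of a convolution is ever needed, and it is precisely this exactness that makes the ground state transform (Proposition~\ref{p:gst}) applicable.
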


Let us  discuss the strategy of the argument of the proof. The proof in \cite{CR18} to show that $ w_{\sigma} $ is a Hardy weight uses the ground state transform for the Schr\"odinger operator $ \Delta^{\sigma}- w $ with $ w=(\Delta^{\sigma}u)/u $ where $ u>0 $ is a superharmonic function  of $ \Delta^{\sigma} $. To the best of our knowledge, this technique was first applied in the discrete setting in \cite{FSW08}. Indeed, \cite{CR18} show that  $ \ka_{-\al} $ are superharmonic functions of $ \Delta^{\si} $ for  $0<\s\leq\alpha<1/2$ and thus
\begin{align*}
	w_{\sigma,\alpha}=\frac{\Delta^{\sigma}\kappa_{-\alpha}}{\kappa_{-\alpha}}=\frac{\kappa_{\sigma-\alpha}}{\kappa_{-\alpha}}
\end{align*}
are Hardy weights. We  also provide a short and concise proof of the second equality in Proposition~\ref{p:ka} below.
By direct calculation one sees that $ w_{\si}=w_{\si,\al} $ for $$  \alpha=\frac{1+2\sigma}{4}  $$ and Theorem~\ref{main} above states the Hardy weight  is optimal in this case.

One may wonder how this particular choice of $ \alpha $ comes about. First of all, this choice of $ \al $ yields the same constant as in the continuous setting. In Theorem~\ref{thm} below, we show that $ w_{\sigma,\al}= {\kappa_{\sigma-\alpha}}/{\kappa_{-\alpha}} $ is critical for $ \al\leq ({1+2\sigma})/{4} $  and  $ \ka_{-\alpha}\not\in \ell^{2}(\mathbb{Z},w_{\si,\al}) $ for $ \al\ge ({1+2\sigma})/{4}. $ 
Thus, $ w_{\si,\al} $ is optimal exactly for $ \alpha=({1+2\sigma})/{4}  $ in which case it is equal to $ w_{\si} $ from above.

There is also another structural reason for the particular choice of $ \al $. In \cite{KPP18} based on \cite{DFP14}, it was shown that one obtains optimal Hardy weights via the super-solution construction 
\begin{align*}
	w=\frac{HG^{1/2}}{G^{1/2}}
\end{align*}
for the Green's function $ G $ of a Schr\"odinger operator $ H $ whenever $ G $ is proper and satisfies an anti-oscillation condition. For $H= \Delta^{\sigma} $, the Green's function can be seen to be given by $G= \kappa_{-\sigma} $. Furthermore, it turns out that
$ \kappa_{-\sigma}^{1/2} $ and $ \kappa_{-\alpha} $ with $  \alpha={(1+2\sigma)}/{4} $ share the same asymptotics. 
Thus, it seems plausible to consider $  \kappa_{-{(1+2\sigma)}/{4} } $  instead of $ \kappa_{-\sigma}^{1/2} $ in the super-solution construction to obtain an optimal Hardy weight.

Let us stress that the method developed in \cite{KPP18} to prove optimality for weighted graphs cannot be directly applied here. The reason is that the assumptions on the existence of a positive superharmonic function which is proper and satisfies an anti-oscillation condition directly implies local finiteness of the underlying graph. Yet, the weighted graph of the fractional Laplacian is non-locally finite as discussed above. Indeed, even the refined method developed by Hake \cite{Hake} does not apply here. Instead, we use the explicit asymptotics of the edge weights in our proof below.

\section{A Family of Hardy Weights}\label{s:HardyWeights}

In this section, we present a family of Hardy weights for the fractional Laplacian. 
It was  observed in \cite{CR18} that $ \ka_{-\al} $ are superharmonic functions of $ \Delta^{\si} $ for  $0<\s\leq\alpha<1/2$. For the sake of being self-contained, we give a short alternative argument of this fact.
\begin{pro}\label{p:ka}
For all  $0<\s\leq\alpha<1/2$, we have 
\begin{align*}
	\Delta^{\si}\kappa_{-\al}=\kappa_{\si-\al}.
\end{align*}
\end{pro}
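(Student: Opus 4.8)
The plan is to prove the identity $\Delta^\sigma \kappa_{-\alpha} = \kappa_{\sigma-\alpha}$ directly from the integral representation of both objects via the heat semigroup, reducing everything to an identity about semigroup operators applied to $1_0$. The key observation is that both sides are defined as time-integrals of $e^{-t\Delta}$ acting on $1_0$ against a power of $t$. Concretely, I would start from the definition
\begin{align*}
\kappa_{-\alpha}(x) = \frac{1}{|\Gamma(\alpha)|}\int_0^\infty e^{-t\Delta}1_0(x)\,\frac{\d t}{t^{1-\alpha}},
\end{align*}
valid for $\alpha\in(0,1/2)$, and apply $\Delta^\sigma$ using its own heat-semigroup representation $\Delta^\sigma = \frac{1}{\Gamma(-\sigma)}\int_0^\infty (e^{-s\Delta}-1)\,\frac{\d s}{s^{1+\sigma}}$. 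The heart of the matter is the semigroup property $e^{-s\Delta}e^{-t\Delta} = e^{-(s+t)\Delta}$, which lets me collapse the double integral in $s$ and $t$ into a single integral of $e^{-r\Delta}1_0$ against an appropriate power of $r$, thereby recognizing the result as $\kappa_{\sigma-\alpha}$.

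First I would interchange $\Delta^\sigma$ with the $t$-integral defining $\kappa_{-\alpha}$, which is justified by the $B_\sigma$-boundedness of $\Delta^\sigma$ noted earlier in the excerpt together with $\kappa_{-\alpha}\in B_\sigma$ for $\alpha>-1/2$. This yields a double integral over $(s,t)\in(0,\infty)^2$ of the form $(e^{-s\Delta}-1)e^{-t\Delta}1_0$ weighted by $s^{-1-\sigma}t^{-1+\alpha}$. Using $e^{-s\Delta}e^{-t\Delta}1_0 = e^{-(s+t)\Delta}1_0$ and $1\cdot e^{-t\Delta}1_0 = e^{-t\Delta}1_0$ (since $e^{-t\Delta}$ preserves the constant function, hence the subtracted term just reproduces $e^{-t\Delta}1_0$), the integrand becomes $\bigl(e^{-(s+t)\Delta} - e^{-t\Delta}\bigr)1_0$. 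The next step is the change of variables $r = s+t$ (for fixed $t$, or a suitable substitution $s = t u$), after which the $t$-integral factors out as a Beta-type integral producing exactly the Gamma-function constants that convert the prefactor $\frac{1}{|\Gamma(\alpha)|\,\Gamma(-\sigma)}$ into $\frac{1}{|\Gamma(\alpha-\sigma)|}$, leaving a single integral $\int_0^\infty e^{-r\Delta}1_0\,\frac{\d r}{r^{1+(\sigma-\alpha)}}$, which is the definition of $\kappa_{\sigma-\alpha}$.

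The main obstacle, and the point requiring the most care, is the convergence and Fubini justification for the double integral, since the integrand has singularities both at $s\to 0$ (from $s^{-1-\sigma}$) and at $t\to 0$ (from $t^{-1+\alpha}$), and the range $\sigma-\alpha$ may be negative, so I must confirm the resulting single integral still lies in the convergent regime $\sigma-\alpha\in(-1/2,1/2)\setminus\{0\}$ where $\kappa_{\sigma-\alpha}$ is defined. The cancellation $e^{-(s+t)\Delta}-e^{-t\Delta}$ is precisely what tames the $s\to 0$ singularity (the difference is $O(s)$ as $s\to 0$), matching the standard regularization built into the fractional Laplacian's definition; I would make this rigorous by splitting the $s$-integral at $s=1$ or by first working with the regularized operator and passing to the limit. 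The degenerate case $\alpha=\sigma$ (giving $\kappa_0 = 1_0$) should be checked separately or seen as a limiting instance, but since the statement allows $\sigma\le\alpha$ with $\sigma-\alpha\le 0$ and $\kappa_0=1_0$ is defined, the Beta-integral computation should specialize correctly. Once the analytic justifications are in place, the algebraic identity among Gamma functions is routine.
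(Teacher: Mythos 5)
Your approach is correct in outline, but it is genuinely different from the paper's proof. You compute $\Delta^\sigma\kappa_{-\alpha}$ head-on: expand both the operator and the function via the heat semigroup, use $e^{-s\Delta}e^{-t\Delta}=e^{-(s+t)\Delta}$, and collapse the double integral by a change of variables. This does work: with the substitution $s=tu$ the $t$-integral produces $\Gamma(\alpha-\sigma)\kappa_{\sigma-\alpha}$ and the remaining $u$-integral is $\int_0^\infty\bigl[(1+u)^{\sigma-\alpha}-1\bigr]u^{-1-\sigma}\d u=\Gamma(-\sigma)\Gamma(\alpha)/\Gamma(\alpha-\sigma)$, i.e.\ the \emph{analytically continued} Beta function $B(-\sigma,\alpha)$ with negative first argument --- the naive Beta integral diverges, and it is precisely the subtracted term in $(e^{-s\Delta}-1)$ that regularizes it, as you implicitly note. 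The Fubini justification you flag as the main obstacle is real work (one needs bounds like $|\partial_t e^{-t\Delta}1_0(x)|\le C\min(t^{-1},t^{-3/2})$ to get absolute convergence of the double integral, and the case $\alpha=\sigma$, where the inner integral becomes a Frullani-type integral yielding $-\log(1+u)$, does need separate treatment), but it can be completed. The paper instead sidesteps all of this hard analysis by a resolvent regularization: it introduces $\kappa_{\beta,\ep}$ with an extra factor $e^{-t\ep}$, observes that $\kappa_{-\alpha,\ep}=(\D+\ep)^{-\alpha}1_0\in\ell^2(\Z)$, gets the composition identity $(\D+\ep)^\sigma(\D+\ep)^{-\alpha}1_0=(\D+\ep)^{\sigma-\alpha}1_0$ for free from the spectral calculus of the strictly positive bounded operator $\D+\ep$, identifies $(\D+\ep)^\sigma$ as a kernel operator plus $\ep^\sigma\cdot\mathrm{id}$, and then lets $\ep\to0$ using monotone convergence of the kernels. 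That route handles $\alpha=\sigma$ and all the convergence issues uniformly and requires no Beta-function bookkeeping; your fallback suggestion of "first working with the regularized operator and passing to the limit" is in fact exactly the paper's device. One small slip in your write-up: the "$1$" in $(e^{-s\Delta}-1)$ is the identity operator, so the subtracted term is $e^{-t\Delta}1_0$ trivially; the remark that "$e^{-t\Delta}$ preserves the constant function" is a non sequitur there (that fact is used elsewhere, namely to write $\Delta^\sigma$ as a graph Laplacian with kernel $\kappa_\sigma$).
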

\begin{proof}
	Define $\kappa_{\beta,\ep} : \Z \longto [0,\infty]$ for $\beta\in \R$ and $\ep>0$ as 
		\[\kappa_{\beta,\eps} 
		= \frac1{|\Gamma(-\beta)|} \int_0^\infty 
		e^{-t\ep} e^{-t\D}1_{0} \frac{\d t}{t^{1+\beta}} \] 
		for $ \beta\ne0 $ and $ \ka_{0,\ep}=1_{0} $ for $ \beta=0 $.
		
		By the spectral calculus, we have for $ \beta <0 $ 
		\begin{align*}
			 (\D+\ep)^{\beta} 1_{0}=\kappa_{\beta,\eps} 	
		\end{align*}
	and therefore, $ \ka_{\beta,\eps}\in \ell^{2}(\Z) $. 
	
	Moreover, for $ \beta>0 $ and $ f\in \ell^{2}(\Z) $, we obtain  by the spectral calculus
	\begin{align*}
		(\Delta+\eps)^{\beta}f(x)&=\frac{1}{\Gamma(-\be)}\int_{0}^{\infty}(e^{-t\eps}e^{-t\Delta}-1)f(x)\frac{\d t}{t^{1+\beta}}.\end{align*}
		We use $ e^{-t\Delta}1=1 $ and $ \int_{0}^{\infty}e^{-t\eps}t^{-1-\be}\d t=\eps^{\beta}\Gamma(-\beta) $ to arrive at
		\begin{align*}
			\ldots &=\frac{1}{\Gamma(-\be)}\int_{0}^{\infty}e^{-t\eps}(e^{-t\Delta}-1)f(x)\frac{\d t}{t^{1+\beta}} +\eps^{\be}f(x) \\
		&=\sum_{y\in \Z}\kappa_{\beta,\eps}(x-y)(f(x)-f(y))+\eps^{\beta} f(x).
	\end{align*}

Now, let $0<\s\leq\alpha<1/2$. So, by the spectral calculus we see that 
		\begin{align*}
			 (\D+\ep)^\s \kappa_{-\alpha,\eps} 
		= (\D+\ep)^\s (\D+\ep)^{-\alpha} 1_{0}
		= (\D+\ep)^{-(\alpha-\s)} 1_{0}
		=\kappa_{\s-\al,\eps}.
		\end{align*}  
Since 	$\ka_{\beta,\eps}(x)$ 	 converges monotonously to $ \kappa_{\beta} (x)$, $ x\in \Z $, as $ \eps\to0 $, the statement follows by the formulas  for $ \beta<0$ and $ \beta>0 $ above.
\end{proof}

From the proof, we can actually derive that $ \ka_{-\sigma} $ is  the Green's function of $ \Delta^{\sigma} $, $ 0<\sigma<1/2 $, i.e.,
\begin{align*}
\lim_{\eps\to 0}	(\Delta+\eps)^{-\sigma}1_{0}=\ka_{-\sigma}.
\end{align*}
In the  remark below, we shortly discuss how the formula above can be extended to the Banach spaces $  B_{-\al} $, $0< \al<1/2 $.
\begin{rem}
	One can define $ \Delta^{-\al} $ for $0< \al<1/2 $ on the Banach space $ B_{-\al}= \ell^1 (\Z, (1+|\cdot|)^{-1+2\al}) $ via
	\begin{align*}
		\Delta^{-\al}u(x)=\sum_{y\in \Z}\ka_{-\al}(x-y)u(y)=\frac{1}{\Gamma(\al)}\int_{0}^{\infty}e^{-t\Delta}u(x)\frac{\d t}{t^{1-\al}}.
	\end{align*}
Then, the proof above can be extended to functions in $ B_{-\al} $ for  $0<\s\leq\alpha<1/2$, i.e., for $ u\in B_{-\al} $, 
\begin{align*}
	\Delta^{\si}\Delta^{-\al}u= \Delta^{\si-\al}u.
\end{align*}
Here, the details can be found in \cite{Nietschmann22}.
\end{rem}

Next, we recall a basic fact which is a direct application of the ground state transform. First, observe that the quadratic form $ Q^{\sigma} $ associated to $ \Delta^{\sigma} $ acts as
\begin{align*}
Q^{\sigma}(f)=\frac{1}{2}\sum_{x,y\in\Z}\ka_{\sigma}(x-y) (f(x)-f(y))^{2}=	\langle \Delta^{\si}f,f\rangle
\end{align*}
on $ \ell^{2}(\Z) $ which can be seen by the virtue of Green's formula (cf. \cite{KLW}).
Furthermore, for  $0<\s\leq\alpha<1/2$ and $ \ph\in C_{c}(\Z) $, denote
\begin{align*}
Q^{\si}_{-\al}(\ph)	=\frac{1}{2}\sum_{x,y\in \Z}\ka_{\si}(x-y)\ka_{-\al}(x)\ka_{-\al}(y)(\ph(x)-\ph(y))^{2}.
\end{align*}

\begin{pro}[Ground state transform]\label{p:gst} Let  $0<\s\leq\alpha<1/2$ and $ w_{\sigma,\alpha}=\ka_{\si-\al} /\ka_{-\al}$. Then, for $ \ph\in C_{c}(\Z) $,
	\begin{align*}
		(Q^{\si}-w_{\s,\alpha})(\ph \ka_{-\al})=Q^{\si}_{-\al}(\ph).
	\end{align*}
In particular, $ w_{\sigma,\alpha} $ is a Hardy weight.
\end{pro}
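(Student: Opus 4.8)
The plan is to carry out the standard ground state transform computation, with $u := \ka_{-\al}$ playing the role of the positive superharmonic function and $f := \ph u \in C_{c}(\Z)$. Since $\ph$ has compact support, so does $f$, and hence $Q^{\si}(f)=\frac12\sum_{x,y}\ka_{\si}(x-y)(f(x)-f(y))^{2}$ is finite. The starting point is the purely algebraic identity (valid pointwise for all $x,y\in\Z$)
\begin{align*}
	(\ph(x)u(x)-\ph(y)u(y))^{2}
	&= u(x)u(y)(\ph(x)-\ph(y))^{2} \\
	&\quad + \ph(x)^{2}u(x)(u(x)-u(y)) + \ph(y)^{2}u(y)(u(y)-u(x)),
\end{align*}
which one checks by expanding both sides. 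Inserting this into $Q^{\si}(f)$ splits the double sum into three pieces.

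The first piece is exactly $Q^{\si}_{-\al}(\ph)$ by definition. For the two cross terms I would use that $\ka_{\si}$ is even, so $\ka_{\si}(x-y)=\ka_{\si}(y-x)$; relabelling $x\leftrightarrow y$ shows the two cross sums coincide, and together with the prefactor $\tfrac12$ they combine into the single sum $\sum_{x,y}\ka_{\si}(x-y)\ph(x)^{2}u(x)(u(x)-u(y))$. Summing first over $y$ and invoking Proposition~\ref{p:ka} gives $\sum_{y}\ka_{\si}(x-y)(u(x)-u(y))=\D^{\si}u(x)=\ka_{\si-\al}(x)$, so this term equals
\[
	\sum_{x}\ph(x)^{2}\ka_{-\al}(x)\ka_{\si-\al}(x)
	= \sum_{x}\ph(x)^{2}\ka_{-\al}(x)^{2}\,\frac{\ka_{\si-\al}(x)}{\ka_{-\al}(x)}
	= \langle w_{\si,\al}f, f\rangle .
\]
Rearranging yields $Q^{\si}(f)-\langle w_{\si,\al}f,f\rangle = Q^{\si}_{-\al}(\ph)$, which is the asserted identity; since $Q^{\si}_{-\al}(\ph)\ge 0$ as a sum of nonnegative terms, $w_{\si,\al}$ is a Hardy weight.

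The step requiring genuine care — and the main obstacle, given that the weighted graph is non-locally finite — is justifying the splitting of the double sum and the interchange of summation order used to isolate $\D^{\si}u(x)$. I would argue that each of the three pieces converges absolutely. For the $Q^{\si}_{-\al}(\ph)$ term and the cross term it suffices to fix $x\in\supp\ph$ (the summand vanishes otherwise) and bound the inner sum over $y$: using $\ka_{\si}\in O(|\cdot|^{-1-2\si})$ together with $\ka_{-\al}\in B_{\si}$ and the decay $\ka_{-\al}(y)\in O(|y|^{-1+2\al})$ with $\al<1/2$, one gets $\sum_{y}\ka_{\si}(x-y)\ka_{-\al}(y)<\infty$ and $\sum_{y}\ka_{\si}(x-y)|u(x)-u(y)|=|\D^{\si}u(x)|<\infty$, the latter being exactly the absolute convergence underlying Proposition~\ref{p:ka} on $B_{\si}$. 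Absolute convergence of each piece (and finiteness of the original sum, which equals $2Q^{\si}(f)<\infty$) legitimizes the term-by-term rearrangement and the exchange $x\leftrightarrow y$, completing the argument.
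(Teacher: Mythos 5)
Your proof is correct, and it differs from the paper's in one respect: the paper's proof is two lines long --- it observes via Proposition~\ref{p:ka} that $(\Delta^{\si}-w_{\si,\al})\ka_{-\al}=0$ and then \emph{cites} the general ground state transform for Schr\"odinger operators on graphs (\cite[Proposition~4.8]{KPP}), whereas you re-derive that transform from scratch. Your algebraic identity
\[
(\ph(x)u(x)-\ph(y)u(y))^{2}=u(x)u(y)(\ph(x)-\ph(y))^{2}+\ph(x)^{2}u(x)(u(x)-u(y))+\ph(y)^{2}u(y)(u(y)-u(x))
\]
is exactly the pointwise identity underlying the cited result, and your convergence analysis is sound: for fixed $x\in\supp\ph$ the bounds $\ka_{\si}\in O(|\cdot|^{-1-2\si})$, $\ka_{-\al}\in O(|\cdot|^{-1+2\al})$ with $\al<1/2$ give absolute convergence of each of the three pieces, which is precisely what justifies the $x\leftrightarrow y$ relabelling and the summation of the cross term over $y$ first to produce $\Delta^{\si}\ka_{-\al}(x)=\ka_{\si-\al}(x)$. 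This is the step that is genuinely delicate here because the graph is non-locally finite, and it is the content hidden inside the citation in the paper's version. What the paper's route buys is brevity and reliance on established general theory; what your route buys is a self-contained argument that makes the summability issues explicit --- which is in the spirit of the paper's own remark that general machinery for locally finite graphs does not apply to $\Delta^{\si}$. One small point you leave implicit in the final step: to conclude that $w_{\si,\al}$ is a Hardy weight you need every $f\in C_{c}(\Z)$ to be of the form $\ph\ka_{-\al}$ with $\ph\in C_{c}(\Z)$, which holds because $\ka_{-\al}$ is strictly positive (as noted in the paper); it is worth saying so explicitly.
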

\begin{proof} By Proposition~\ref{p:ka}, we have that $ \Delta^{\si}\ka_{-\al}=\ka_{\si-\al} $. Thus, $ (\Delta^{\sigma}-w_{\s,\al}) \ka_{-\al}=0 $ and 
	the statement follows by   the ground state transform, cf. \cite[Proposition~4.8]{KPP}.
\end{proof}

\section{Proof of Optimality}\label{s:optimality}

For the proof of optimality, we use the following criterion. 

\begin{pro}\label{p:crit} Let  $0<\s\leq\alpha<1/2$. Then, $ w_{\sigma,\alpha}=\ka_{\si-\al} /\ka_{-\al}$ is critical if and only if there is a sequence $ 0\leq e_{n}\le 1 $ in $ C_{c}(\Z) $ such that $ e_{n}\to 1 $ pointwise and
	\begin{align*}
		\sup_{n\in \N} Q_{-\al}^{\si}(e_{n})<\infty.
	\end{align*}
\end{pro}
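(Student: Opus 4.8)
The plan is to reduce criticality of $w_{\sigma,\al}$ to a null-sequence statement for the ground-state-transformed form $Q^{\si}_{-\al}$, and then to show that requiring merely $\sup_{n} Q^{\si}_{-\al}(e_{n})<\infty$ (rather than $Q^{\si}_{-\al}(e_{n})\to 0$) is already equivalent, via a convexity argument. By Proposition~\ref{p:ka} the function $\ka_{-\al}>0$ solves $(\Delta^{\sigma}-w_{\sigma,\al})\ka_{-\al}=0$, so it is the Agmon ground state, and by Proposition~\ref{p:gst} the ground state transform identifies $(Q^{\sigma}-w_{\sigma,\al})(\ph\ka_{-\al})$ with $Q^{\si}_{-\al}(\ph)$. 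The first step is to invoke the null-sequence characterization of criticality from \cite{KPP}: $w_{\sigma,\al}$ is critical if and only if there is a sequence $(f_{n})\subset C_{c}(\Z)$ with $f_{n}\to\ka_{-\al}$ pointwise and $(Q^{\sigma}-w_{\sigma,\al})(f_{n})\to 0$. Writing $f_{n}=e_{n}\ka_{-\al}$ and using the transform, this is precisely the existence of $(e_{n})\subset C_{c}(\Z)$ with $e_{n}\to 1$ pointwise and $Q^{\si}_{-\al}(e_{n})\to 0$.

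For the direction where $w_{\sigma,\al}$ is critical, I would take such a null sequence and apply the normal contraction $T\ph=0\vee(\ph\wedge 1)$. Since $Q^{\si}_{-\al}$ is a sum of terms $(\ph(x)-\ph(y))^{2}$ with nonnegative weights and $T$ satisfies $|T\ph(x)-T\ph(y)|\le|\ph(x)-\ph(y)|$, we get $Q^{\si}_{-\al}(Te_{n})\le Q^{\si}_{-\al}(e_{n})$; moreover $Te_{n}\in C_{c}(\Z)$, $0\le Te_{n}\le 1$, and $Te_{n}\to T(1)=1$ pointwise. This yields a sequence as in the statement with even $Q^{\si}_{-\al}(Te_{n})\to 0$, hence $\sup_{n}Q^{\si}_{-\al}(Te_{n})<\infty$.

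The substantive direction is the converse: given $0\le e_{n}\le 1$ with $e_{n}\to 1$ pointwise and $M:=\sup_{n}Q^{\si}_{-\al}(e_{n})<\infty$, I must manufacture a genuine null sequence. I would regard $Q^{\si}_{-\al}$ as the energy form of a weighted graph on $\Z$ with edge weights $\ka_{\si}(x-y)\ka_{-\al}(x)\ka_{-\al}(y)$; since $\ka_{\si}$ is strictly positive and $\ka_{-\al}>0$, this graph is connected, so $Q^{\si}_{-\al}(\ph)=0$ forces $\ph$ constant. Consequently the completion $\mathcal D$ of $C_{c}(\Z)$ modulo constants under $Q^{\si}_{-\al}$ is a Hilbert space in which $(e_{n})$ is bounded. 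By the Banach–Saks theorem some sequence of convex combinations $\tilde e_{N}=\sum_{n}\lambda^{N}_{n}e_{n}$ converges in $\mathcal D$; as convex combinations they still satisfy $\tilde e_{N}\in C_{c}(\Z)$, $0\le\tilde e_{N}\le 1$, and $\tilde e_{N}\to 1$ pointwise. Matching the pointwise and Hilbert-space limits identifies the limit with the class of the constant function, whence $Q^{\si}_{-\al}(\tilde e_{N})\to 0$, producing the null sequence and so criticality by the first step. I expect the main obstacle to be this last direction, specifically setting up the correct Hilbert-space completion of $C_{c}(\Z)$ under $Q^{\si}_{-\al}$, verifying that its kernel consists exactly of constants (connectedness via strict positivity of the kernels), and confirming that the Banach–Saks limit of the convex combinations is indeed the constant $1$ so that their energies vanish.
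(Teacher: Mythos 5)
Your proposal is correct and takes essentially the same route as the paper: reduce criticality to the null-sequence characterization of \cite[Theorem~5.3]{KPP}, pass through the ground state transform (Proposition~\ref{p:gst}) to restate it as recurrence of $Q^{\si}_{-\al}$, and then relax ``energy tending to zero'' to ``bounded energy'' by a Banach--Saks argument. The only difference is that where the paper simply cites \cite[Theorem~6.1, (i.d) $\Leftrightarrow$ (i.e)]{KLW} for this last equivalence, you carry out that argument by hand (normal contraction in one direction; Ces\`aro means in the energy space modulo constants, with connectedness identifying the limit as a constant, in the other), which is precisely the content of the cited result.
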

\begin{proof} By \cite[Theorem~5.3]{KPP}, the form $ (Q^{\si}-w_{\si,\al}) $ is critical if and only if there is a null-sequence, i.e., there is a sequence $ 0\leq \eta_{n}\leq \ka_{-\al} $ in $ C_{c}(\Z) $ which converges pointwise to $ \ka_{-\al} $ and $  (Q^{\si}-w_{\si,\al})(\eta_{n})\to 0 $, $ n\to\infty $. By  the ground state transform above, we see that criticality of $ (Q^{\si}-w_{\si,\al}) $ is equivalent to recurrence of $ Q^{\si}_{-\al} $, i.e., existence of $0\leq e_{n}\leq 1  $ in $ C_{c}(\Z) $ such that $ e_{n}\to 1 $ pointwise and $ Q^{\si}_{-\al} (e_{n})\to 0$ as  $ n\to\infty $. By a Banach-Saks type argument, cf. \cite[Theorem~6.1, (i.d) $ \Leftrightarrow  $ (i.e)]{KLW}, the claim follows.
\end{proof}

Theorem~\ref{main} is a direct consequence of the following theorem which elaborates on the criticality on the Hardy weights $ w_{\sigma,\alpha} $ from above.
For the proof, we present a sequence $ (e_{n}) $ as in the proposition above.

\begin{thm}\label{thm}Let  $0<\s\leq\alpha<1/2$ and $ w_{\sigma,\alpha}=\ka_{\si-\al} /\ka_{-\al}$.
	\begin{itemize}
		\item [(a)] The Hardy weight $  w_{\sigma,\alpha} $ is critical for $\al \leq (1+2\s)/4 $.
		\item [(b)] The function $ \ka_{-\al}\in \ell^{2}(\Z,w_{\sigma,\alpha}) $ if and only if $\al < (1+2\s)/4 $. 
	In particular, the Hardy weight $  w_{\sigma,\alpha} $ is positive-critical for $ \al < (1+2\s)/4 $  and null-critical for $ \al = (1+2\s)/4$.
	\end{itemize}
\end{thm}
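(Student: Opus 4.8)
The plan is to treat the two parts separately: part (b) is a direct asymptotic computation that also pins down where the threshold $\al=(1+2\si)/4$ comes from, while part (a) is the substantial one and is carried out through the recurrence criterion of Proposition~\ref{p:crit}.

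I would start with (b). Here one simply computes
\[ \sum_{x\in\Z} w_{\si,\al}(x)\,\ka_{-\al}(x)^2 = \sum_{x\in\Z}\frac{\ka_{\si-\al}(x)}{\ka_{-\al}(x)}\,\ka_{-\al}(x)^2 = \sum_{x\in\Z}\ka_{\si-\al}(x)\,\ka_{-\al}(x), \]
and inserts the Gamma-quotient asymptotics $\ka_{-\al}(x)\asymp|x|^{2\al-1}$ and $\ka_{\si-\al}(x)\asymp|x|^{-1-2(\si-\al)}$ (valid for indices in $(-1/2,1)$, and with the index $\si-\al\in(-1/2,0]$ here). The summand then behaves like $|x|^{-2-2\si+4\al}$, so the series converges exactly when $-2-2\si+4\al<-1$, that is $\al<(1+2\si)/4$, and diverges like the harmonic series at equality. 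Since $\ka_{-\al}$ is the positive solution of $(\D^{\si}-w_{\si,\al})\ka_{-\al}=0$ by Proposition~\ref{p:ka}, it is the Agmon ground state once criticality is known, so membership resp.\ non-membership in $\ell^2(\Z,w_{\si,\al})$ gives positive- resp.\ null-criticality. This yields (b) and, combined with (a), the final ``in particular'' clause.

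For (a) I would invoke Proposition~\ref{p:crit}: it suffices to exhibit $0\le e_n\le1$ in $C_c(\Z)$ with $e_n\to1$ pointwise and $\sup_n Q^{\si}_{-\al}(e_n)<\infty$. I would take the radial logarithmic cutoff $e_n$ equal to $1$ on $\{|x|\le n\}$, equal to $0$ on $\{|x|\ge n^2\}$, and interpolating as $e_n(x)=\log(n^2/|x|)/\log n$ on the annulus $n\le|x|\le n^2$. Writing the energy via the edge weights $\ka_{\si}(x-y)\ka_{-\al}(x)\ka_{-\al}(y)\asymp|x-y|^{-1-2\si}|x|^{2\al-1}|y|^{2\al-1}$, I would organise the double sum by dyadic scales $|x|\asymp2^j$, $|y|\asymp2^k$. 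The decisive bookkeeping is that, up to the gradient normalisation $1/(\log n)^2$, a single dyadic scale $2^j$ in the transition region contributes of order $2^{j(4\al-2\si-1)}$; the exponent $4\al-2\si-1$ is negative for $\al<(1+2\si)/4$ and vanishes precisely at $\al=(1+2\si)/4$.

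This dichotomy drives the estimate: for $\al<(1+2\si)/4$ the scale sum is geometrically convergent and one even gets $Q^{\si}_{-\al}(e_n)\to0$, while at the borderline $\al=(1+2\si)/4$ every scale contributes equally, of size $\asymp1/(\log n)^2$, and there are $\asymp\log n$ scales between $n$ and $n^2$, so the total is $\asymp1/\log n\to0$. The bulk-to-exterior interaction $\{|x|\le n\}\times\{|y|\ge n^2\}$, where $(e_n(x)-e_n(y))^2=1$, is dispatched separately: the jump length is $\gtrsim n^2$, so the power decay of $\ka_\si$ already makes it $O(n^{6\al-4\si-2})\to0$ with no logarithmic help. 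I expect the main obstacle to be exactly the long-range interactions near the cutoff boundary, i.e.\ pairs with $|x|\asymp|y|$ both of order the transition scale: for a sharp cutoff $e_n=1_{\{|x|\le n\}}$ these already produce a divergent contribution of order $n^{2\si}$, so naive cutoffs fail at the endpoint, and it is the logarithmic smoothing spread over $\asymp\log n$ scales that reduces each scale's share to $O(1/(\log n)^2)$ and rescues recurrence. Making this rigorous requires the Gamma-quotient asymptotics with explicit error terms (to pass from $\asymp$ to genuine inequalities), careful summation of the cross-scale terms where the factor $(\log(|y|/|x|))^2$ competes with the decay $|x-y|^{-1-2\si}$, and control of the near-diagonal terms, governed by the partial sum $\sum_{1\le r\le R}r^{1-2\si}\asymp R^{2-2\si}$, which grows only because $s=1+2\si<2$. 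Once these estimates are assembled, Proposition~\ref{p:crit} gives criticality for all $\al\le(1+2\si)/4$, completing (a).
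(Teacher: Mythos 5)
Your proposal is correct, and part (b) coincides with the paper's argument (both reduce to $\sum\ka_{\si-\al}\ka_{-\al}\asymp\sum_{x\ge1}x^{(4\al-1-2\si)-1}$ and the identification of $\ka_{-\al}$ as the Agmon ground state). For part (a), however, you take a genuinely different route. The paper uses the cutoff $e_n(x)=0\vee\bigl(1-\sqrt{\log|x|/\log n}\bigr)$, supported in $\{|x|<n\}$: the square root makes $(e_n(x)-e_n(y))^2\le\log(x/y)/\log n$ by subadditivity, and the energy is then bounded by splitting into $\{1\le y<x\le n\}$ and $\{1\le y<n<x\}$ and comparing with explicit one-dimensional integrals; at the endpoint $\al=(1+2\si)/4$ this yields $\sup_n Q^{\si}_{-\al}(e_n)<\infty$ (of order $\tfrac{1}{\log n}\sum_{y<n}y^{-1}\asymp1$), which is all Proposition~\ref{p:crit} requires. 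You instead use the classical logarithmic cutoff $\log(n^2/|x|)/\log n$ on the annulus $[n,n^2]$ with dyadic bookkeeping; your key per-scale count $2^{j(4\al-2\si-1)}/(\log n)^2$ is correct (as are the geometric decay $2^{-(\si+1/2)|j-k|}$ of cross-scale terms and the bulk-to-exterior bound $O(n^{6\al-4\si-2})$), and at the endpoint this gives the stronger conclusion $Q^{\si}_{-\al}(e_n)\asymp1/\log n\to0$, i.e.\ a genuine null sequence, so you do not even need the Banach--Saks step hidden in Proposition~\ref{p:crit}. The trade-off: the paper's estimates are shorter and need only one-sided integral comparisons, while yours requires two-sided kernel asymptotics and careful cross-scale summation, which you correctly identify as the work to be done.

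One motivational claim in your sketch is wrong, though it is not load-bearing: the sharp cutoff $1_{\{|x|\le n\}}$ does \emph{not} produce a divergent contribution of order $n^{2\si}$ at the endpoint. Since $\si<1/2$, jump discontinuities have finite nonlocal energy, and with the ground-state weights one computes that the crossing pairs contribute
\begin{align*}
\sum_{0\le x\le n<y}\frac{1}{(y-x)^{1+2\si}}\,(xy)^{2\al-1}\asymp n^{4\al-2}\sum_{s\le n}s^{-2\si}+O(1)\asymp n^{4\al-1-2\si},
\end{align*}
which at $\al=(1+2\si)/4$ is $O(1)$. So sharp cutoffs already satisfy the boundedness criterion of Proposition~\ref{p:crit}; the logarithmic smoothing is what upgrades boundedness to convergence to zero, not what rescues finiteness.
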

\begin{proof}
By Proposition~\ref{p:gst} the function $ w_{\si,\al} $ is a Hardy weight. 
To show criticality of $ w_{\si,\al} $, we present a sequence $ (e_{n}) $ with the properties as in the assumptions of Proposition~\ref{p:crit}.	
For $n\in\N$, we define $e_n : \Z \longto \R$, $e_n(0)=1$ and
\[ e_n(x) = 0\vee\left(1-\sqrt{\frac{\log|x|}{\log n}}\right),
\quad x\neq0. \] 
Clearly, the support of $e_n$ is $\{x\in\Z\mid|x|<n\}$, so $(e_n)$ is a sequence in $C_c(\Z)$, and $e_n\to1$ pointwise. So, by the proposition above, criticality of
$ w_{\si,\al}$ follows if we can show the uniform boundedness of $Q^\s_{-\al}(e_n)$. 
It is straight forward to  check using the asymptotics of $ \ka_{\beta}\in O(|\cdot|^{-1-2\beta}) $ that 
\begin{align*}
	Q^{\si}_{-\al}(e_{n})\leq C \sum_{1\le y<x} 
	\frac{(e_n(x)-e_n(y))^2}
	{(x-y)^{1+2\s}(xy)^{1-2\al}}.
\end{align*}
We divide the sum above into two sums over the sets $ \{ 1\le y<x\le n \} $ and $ \{1\le y< n <x \} $ since $ (e_{n}(x)-e_{n}(y))^2 $ vanishes outside of these two sets.

For the first sum, we use the subadditivity of the square root to estimate $  (e_{n}(x)-e_{n}(y))^{2}\leq \log (x/y)/\log n$ and obtain
\begin{align*}
	\sum_{1\le y<x\le n}  \frac{(e_n(x)-e_n(y))^2}
	{(x-y)^{1+2\s}(xy)^{1-2\al}}
	&\leq \frac1{\log n} \sum_{1\le y<x\le n}
	\frac{\log( x/y)}{(x-y)^{1+2\s}(xy)^{1-2\al}} 
      \\
    &\leq \frac1{\log n} \sum_{1\le y<n} \frac1{y^{3+2\s-4\al}} 
	\int_y^\infty \!\! \frac{\log(x/y) \d x}
	{(\frac xy-1)^{1+2\s}(\frac xy)^{1-2\al}} 
	  \\
	&= \frac1{\log n} \sum_{1\le y<n} \frac1{y^{2+2\s-4\al}} 
	\int_1^\infty \!\! \frac{\log t \d t}
	{(t-1)^{1+2\s}t^{1-2\al}}. 
\end{align*}
While we always have $\log t \leq t-1$, it is also true 
that for every $\beta>0$, there exists $t_\beta>0$ such that $\log t \leq (t-1)^{2\beta}$ for $t>t_\beta+1$. Hence, with $\beta = 1/2-\al$, 
\begin{align*}
\ldots 
	&\leq \frac1{\log n} \sum_{1\le y<n} \frac1{y^{2+2\s-4\al}} 
	\left(\int_0^{t_\beta} t^{(1-2\s)-1} \d t 
	+ \int_{t_\beta}^\infty t^{(2( \beta+\al-\si)-1)-1} \d t\right) \\ 
&\leq \frac{C_{\s,\al}}{\log n} \sum_{1\le y<n} \frac1{y^{2+2\s-4\al}}
\end{align*}
and the right hand side stays bounded by the $ \log $-asymptotics of the harmonic series if $\alpha=(1+2\s)/4$ and even goes to zero if $\alpha<(1+2\s)/4$. 

For the second  sum over the set $ \{1\le y< n<x\} $, we estimate the difference $ (e_{n}(x)-e_{n}(y)) ^{2}\leq 1$ and obtain 
\begin{align*}
	\sum_{1\le y< n<x} &\frac{(e_n(x)-e_n(y))^2}{(x-y)^{1+2\s}(xy)^{1-2\al}} 
		\leq \sum_{1\le y< n<x} \frac1{(x-y)^{1+2\s}(xy)^{1-2\al}} \\ 
		&\qquad\le \sum_{1\le y<n} \frac1{y^{2+2\s-4\al}} 
			\int_{\frac ny}^\infty (t-1)^{(2(\al-\s)-1)-1} \d t \\ 
		&\qquad= C_{\s,\al} \sum_{y<n} \frac1{y^{2+2\s-4\al}} 
			\left(\frac ny-1\right)^{2(\al-\s)-1} \\ 
		&\qquad\leq C_{\s,\al} \left(\int_0^{\frac ne} 
			+ \int_{\frac ne}^n\right) \frac1{y^{2+2\s-4\al}} 
			\left(\frac ny-1\right)^{2(\al-\s)-1} \d y.
\end{align*}
	We continue  to split up the integral and estimate
	\begin{align*}
\ldots &= \frac{C_{\s,\al}}{n^{2\s-4\al+1}} \int_0^{\frac ne} \left(\frac yn\right)^{2\al-1} 
\left(1-\frac yn\right)^{2(\al-\si)-1} \frac{\d y}n \\ 
		&\qquad\qquad\qquad+ \frac{C_{\s,\al}}{n^{2\s-4\al+1}} \int_{\frac ne}^n \left(\frac ny\right)^{2\s-4\al+1} \left(\frac ny-1\right)^{2(\al-\s)-1} \frac{\d y}y \\ 
	&\leq \frac{C_{\s,\al}}{n^{2\s-4\al+1}} \left[\left(1-\frac1e\right)^{2(\al-\s)-1} 
	\int_0^{\frac ne} \left(\frac yn\right)^{2\al-1} 
	\frac{\d y}n \right. \\ 
		&\qquad\qquad\qquad+ \left.\max\{e^{2\s-4\al+1},1\} 
		\int_{\frac ne}^n 
	\left(\log\frac ny\right)^{2(\al-\s)-1} \frac{\d y}y \right] \\
	&\le \frac{C_{\s,\al}}{n^{2\s-4\al+1}}\left[\int_0^{\frac1e} t^{2\al-1} \d t 
			+ \int_0^1 s^{2(\al-\s)-1} \d s\right]  
\end{align*}
where we substituted $t=y/n$ and $s = \log(n/y)$ such that 
$(\mathrm{d}y)/y = -\d s$. Observe that the right hand side stays bounded for $\al=({1+2\s})/4$ and even goes to zero if $\al<({1+2\s})/4$.
	
Putting  the two estimates together, we obtain 
$ \sup_{n\in\N} Q^{\si}_{-\al} (e_n) < \infty $ which shows criticality of $ (Q^{\si}-w_{\si,\alpha}) $ for $\alpha\le({1+2\s})/4$ by  Proposition~\ref{p:crit}.

To show null-criticality/positive-criticality, observe that $\psi= \ka_{-\al} $ is harmonic for $ (\Delta^{\si}-w_{\si,\al}) $. Since $ w_{\si,\al} $ is critical, $ \psi $ is the unique positive harmonic function (up to multiplicative constants) \cite[Theorem~5.3]{KPP}. From the asymptotics $\psi= \ka_{-\al}\in O(|\cdot|^{-1+2\al}) $ and $ w_{\si,\al}\in  O(|\cdot|^{-2\si}) $, we infer that
\begin{align*}
	\sum_{x\in \Z}\psi^{2}w_{\si,\al} \asymp   	\sum_{x\ge 1} {x^{2(-1+2\alpha)-2\si }}=   	\sum_{x\ge 1} {x^{(4\al-1-2\s)-1 }} 
	\begin{cases} 
		=\infty, &\al=\frac{1+2\s}4 \\ 
		<\infty, &\al<\frac{1+2\s}4, 
	\end{cases} 
\end{align*}
where $ \asymp $ means that there are two-sided estimates with positive constants.
This shows the statement about  null-criticality/positive-criticality.
\end{proof}

\begin{rem} The leading asymptotics of the Hardy weights $ w_{\sigma,\alpha} $ in $x$ differ in their constant but share the same decay which is $|x|^{-2\s}$. Numerically, it can be checked that these constants take their global maximum in $ [\s,1/2) $ at $ \alpha=(1+2\s) /4$ which then gives an optimal Hardy weight. This  implies that the weights $ w_{\sigma,\alpha} $ cannot be critical for  $ \alpha>(1+2\s) /4$: If they were critical, then they would also be null-critical as $ \ka_{-\al} $ is not in $ \ell^{2}(\Z,w_{\si,\al}) $ for $ \alpha>(1+2\s) /4$. However, in this case they would also be optimal at infinity which is a contradiction to the constant being strictly smaller than the one for $  \alpha=(1+2\s) /4 $.
\end{rem}
Finally, we deduce our main theorem from Theorem~\ref{thm} above.

\begin{proof}[Proof of Theorem~\ref{main}] By the theorem above, we have that 
	$  w_{\si,\al}=\ka_{\si-\al}/\ka_{-\al} $ is critical and null-critical and hence optimal for $ \al=({1+2\s})/4  $. Hence, the statement follows since
	 $w_{\sigma}= w_{\sigma,\al} $ for $ \al=({1+2\s})/4$.
\end{proof}

\textbf{Acknowledgement.} MK acknowledges the financial support of the DFG. Furthermore, the authors thank Philipp Hake, Yehuda Pinchover, Felix Pogorzelski and Luz Roncal for valuable discussions on the subject.

\bibliographystyle{alpha}
\bibliography{mynewbib.bib}

\end{document}